\documentclass{amsart}
\usepackage{amsmath, amssymb,verbatim}
\newtheorem{theorem}{Theorem}

\newtheorem{lemma}{Lemma}
\newtheorem{corollary}{Corollary}

\subjclass[2010]{Primary 47B35; Secondary 47L80}
\keywords{Toeplitz operator, biharmonic function, Mellin transform.}
\title[ Commuting Toeplitz operators with biharmonic symbols]
{Commuting Toeplitz operators with biharmonic symbols}

\author[Bouhali \and Louhichi \and Yousef]{Aissa Bouhali$^{1}$ \and Issam Louhichi$^{*,2}$ \and Abdelrahman Yousef$^{3}$}

\thanks{Corresponding author: Issam Louhichi. Email: \texttt{ilouhichi@aus.edu}}

\address[1]{Department of Mathematics,  Higher Normal School of Laghouat, Laghouat, Algeria.\newline  Applied Sciences and Didactics Laboratory, Higher Normal School of Laghouat, Laghouat, Algeria.\newline  Pure and applied mathematics Laboratory, University of Laghouat, Laghouat, Algeria.}
\email{aissa.bouhali@ens-lagh.dz}

\address[2]{Department of Mathematics \& Statistics, American University of Sharjah,  Sharjah, United Arab Emirates.}
\email{ilouhichi@aus.edu}
\address[3]{Department of Mathematics \& Statistics, American University of Sharjah,  Sharjah, United Arab Emirates.}
\email{afyousef@aus.edu}

\begin{document}

	\date{\today} 
\begin{abstract} We investigate the commutant problem for Toeplitz operators on the Bergman space of the unit disk whose symbols belong to a subclass of biharmonic functions. We obtain a complete characterization of when two such Toeplitz operators commute. As a consequence, we derive a full description of normal Toeplitz operators with symbols in this class.
\end{abstract}
\maketitle 

\section{Introduction}
Let $\mathbb{C}$ denote the complex plane and let $\mathbb{D}$ be the open unit disk. We equip $\mathbb{D}$ with the normalized Lebesgue area measure $dA=rdr\frac{d\theta}{\pi}$, where $(r,\theta)$ are polar coordinates. The Hilbert space $L^2(\mathbb{D},dA)$ consists of all functions that are square-integrable on $\mathbb{D}$ with respect to this measure. 

The (unweighted) Bergman space, denoted by $L^2_a(\mathbb{D})$, is the closed subspace of $L^2(\mathbb{D},dA)$ formed by analytic functions on $\mathbb{D}$. It is well known that the monomials $\{z^k:k=0,1,2,\dots\}$ constitute an orthogonal basis for $L^2_a(\mathbb{D})$. Since $L^2_a(\mathbb{D})$ is closed in $L^2(\mathbb{D},dA)$, there exists an orthogonal projection from $L^2(\mathbb{D},dA)$ onto $L^2_a(\mathbb{D})$, usually called the Bergman projection. For further background on Bergman spaces and the associated projection operator, see \cite{h}.

For a bounded measurable function $f$ on $\mathbb{D}$. The Toeplitz operator  $T_f$   on $L^2_a(\mathbb{D})$ is defined by  $$T_f(g) = P(fg), \qquad g\in L^2_a(\mathbb{D}),$$ where $P$ denotes the Bergman projection. The function $f$ is called the symbol of $T_f$. Moreover, for $f\in L^{\infty}(\mathbb{D})$,  $T_f$ is bounded and $||T_f||\leq ||f||_{\infty}$.

Toeplitz operators on Bergman spaces have attracted considerable attention over the past several decades due to their rich algebraic structure and connections with operator theory and function theory. A fundamental problem in this area is the description of the commutant of a Toeplitz operator, that is, the set of all Toeplitz operators that commute with it. While substantial progress has been made in several special situations, the general problem of determining when two Toeplitz operators commute, particularly for broad classes of symbols, remains largely unresolved. For an account of the results known so far on the commutativity problem for Toeplitz operators on $L^2_a(\mathbb{D})$, as well as various cases where partial answers have been obtained, we refer the reader to \cite{acr, cao, carl, carl2, c, cr, dong, lt, l, lr, lry, lsz, lz, rv, y}. 

The present work is motivated by the results obtained in \cite{ac}, where the authors characterized commuting Toeplitz operators with bounded harmonic symbols and applied this characterization to describe the normal Toeplitz operators whose symbols are bounded harmonic functions. In the spirit of that work, we investigate the commutativity problem when the symbols belong to a particular class of biharmonic functions. Since every harmonic function is biharmonic, but the converse does not hold in general, the study of Toeplitz operators with biharmonic symbols naturally extends the framework considered in \cite{ac}.

The introduction of biharmonic symbols in the theory of Toeplitz operators was initiated by the third author in \cite{y}. Recall that a function $f$ is called biharmonic if
$$\nabla^4f=\nabla ^2(\nabla ^2f)=0,$$
where $\nabla^2$ denotes the Laplacian operator. By the Almansi representation, any biharmonic function $f$ on a connected domain of $\mathbb{C}$ (in particular $\mathbb{D}$) can be written in the form
$$f(z)=g(z)+|z|^2h(z),$$
where $g$ and $h$ are harmonic functions. Since harmonic functions can be decomposed into analytic and anti-analytic parts, it follows that a biharmonic functions admits a representation of the form
$$f(z)=g_1(z)+\overline{g_2(z)}+|z|^2(h_1(z)+\overline{h_2(z)}),$$
where $g_i$ and $h_i$ are analytic functions. For a comprehensive treatment of polyharmonic (including biharmonic) functions and Almansi representation, readers may refer to \cite{aron}.  In this paper we focus on the case where these analytic components have finite power series expansions, that is, they are polynomials in $z$. Even under this restriction, the commutativity problem turns out to be highly nontrivial. In particular, the techniques developed in \cite{ac} cannot be directly adapted to the present setting, which requires a substantially different approach. Our method relies primarily on the use of the Mellin transform, which provides an effective tool for analyzing the resulting functional identities. We believe that this approach highlights both the motivation and the novelty of the results obtained in this work.

\section{Preliminaries}

	For a measurable radial function $\phi$ on $\mathbb{D}$ (i.e., $\phi(z)=\phi(|z|)$), interpreted as a function in $L^{1}([0,1), rdr)$, the Mellin transform, denoted by $\widehat{\phi}$,  is defined by $$\widehat{\phi}(z)=\int_0^1\phi(r)r^{z-1}dr.$$
It is well-known that for such functions, the Mellin transform is bounded  on the right half-plane $\{z:\Re z\geq 2\}$ and is analytic on $\{z:\Re z> 2\}$. 

 The following lemma, taken from \cite[Lemma 5.3, p. 531]{lsz}, describes the action of a certain class of Toeplitz operators on the elements of the orthogonal basis of $L^2_a(\mathbb{D})$. 
\begin{lemma}\label{mellin}
	Let $k, p, s\in\mathbb{Z}^+_0$. Then
	$$T_{e^{ip\theta}r^s}(z^k)=\frac{2k+2p+2}{2k+p+s+2}z^{k+p}$$ and 
	$$T_{e^{-ip\theta}r^s}(z^k)=\left\{\begin{array}{ll}0&\textrm{ if }\ 0\leq k\leq p-1,\\
		\frac{2k-2p+2}{2k-p+s+2}z^{k-p}&\textrm{ if }\ k\geq p.\end{array}\right.$$
\end{lemma}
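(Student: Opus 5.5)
The plan is to compute $T_{e^{\pm ip\theta}r^s}(z^k)$ directly from the definition $T_f g=P(fg)$, using the orthogonal basis $\{z^n\}$ of $L^2_a(\mathbb{D})$. Since $P$ is the orthogonal projection onto $L^2_a(\mathbb{D})$, we have
$$P(u)=\sum_{n\ge 0}\frac{\langle u,z^n\rangle}{\|z^n\|^2}z^n .$$
A polar-coordinate computation with $dA=r\,dr\,d\theta/\pi$ gives $\|z^n\|^2=2\int_0^1 r^{2n+1}\,dr=\frac{1}{n+1}$. So it suffices to compute the inner products $\langle e^{\pm ip\theta}r^s z^k, z^n\rangle$ for all $n\ge 0$.

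For the first formula, write $z^k=r^ke^{ik\theta}$, so that $e^{ip\theta}r^sz^k=r^{s+k}e^{i(k+p)\theta}$. Then
$$\langle e^{ip\theta}r^sz^k,z^n\rangle=\frac{1}{\pi}\int_0^{2\pi}e^{i(k+p-n)\theta}\,d\theta\int_0^1 r^{s+k+n+1}\,dr .$$
The angular integral vanishes unless $n=k+p$, in which case it equals $2\pi$. The inner product is therefore $\frac{2}{2k+p+s+2}$, which is $2\,\widehat{\phi}(2k+p+2)$ with $\phi(r)=r^s$, explaining the Mellin-transform viewpoint. Dividing by $\|z^{k+p}\|^2=\frac{1}{k+p+1}$ gives the coefficient $\frac{2k+2p+2}{2k+p+s+2}$ of $z^{k+p}$, which is the claimed formula.

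For the second formula, the same computation with $e^{i(k-p-n)\theta}$ forces $n=k-p$. When $k\le p-1$, no admissible $n\ge0$ exists, so every coefficient vanishes and $T_{e^{-ip\theta}r^s}(z^k)=0$. When $k\ge p$, the only surviving term is $n=k-p$. The radial integral is $\int_0^1 r^{s+2k-p+1}\,dr=\frac{1}{2k-p+s+2}$, which gives an inner product of $\frac{2}{2k-p+s+2}$. Multiplying by $k-p+1$ yields the coefficient $\frac{2k-2p+2}{2k-p+s+2}$ of $z^{k-p}$.

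There is no real obstacle here. The only care needed is justifying the expansion of $P(u)$ for $u=e^{\pm ip\theta}r^sz^k$: this function is bounded on $\mathbb{D}$, hence in $L^2(\mathbb{D},dA)$, so the orthogonal expansion converges in norm. The bookkeeping of the exponent $s+k+n+1$ in the radial integral is the only place an error could creep in.
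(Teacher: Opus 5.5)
Your proposal is correct: expanding $P(e^{\pm ip\theta}r^sz^k)$ in the orthogonal basis $\{z^n\}$ with $\|z^n\|^2=\frac{1}{n+1}$ works. The angular integral isolates $n=k\pm p$, and the radial integral equals $2\widehat{\phi}(2k\pm p+2)$ with $\phi(r)=r^s$, which gives exactly the stated coefficients, including the vanishing case $k\le p-1$. The paper does not reprove this lemma but quotes it from \cite[Lemma 5.3]{lsz}, and your direct inner-product (Mellin transform) computation is the standard argument behind that result.
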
	



		
\section{Main Result}

Let $\phi_i(z)=f_i(z)+\overline{g_i(z)}$ and $\psi_i(z)=h_i(z)+\overline{k_i(z)}$ with $i=1,2$, where $f_i, g_i, h_i, k_i$ are polynomials in $z$ of degree $N_i, m_i, P_i, q_i$ respectively:
\begin{equation*}
f_i(z)=\sum_{n=0}^{N_i}a_{i,n}z^n,\  g_i(z)=\sum_{n=0}^{m_i}a_{i,-n}z^n,\  h_i(z)=\sum_{n=0}^{P_i}b_{i,n}z^n,\  k_i(z)=\sum_{n=0}^{q_i}b_{i,-n}z^n.
\end{equation*}
We assume that the products  $a_{i,N_i} a_{i,-m_i}b_{i,P_i} b_{i,-q_i}$ are non-zero for  $i=1,2$, ensuring that these degrees are well-defined. Using these components, we define the biharmonic functions $\Phi$ and $\Psi$ as follows: $$\Phi(z)=\phi_1(z)+\vert z\vert^2\phi_2(z)\quad \textrm{and}\quad \Psi(z)=\psi_1(z)+\vert z\vert^2\psi_2(z).$$
The following theorem is the main result of this paper and can be viewed as an analogue of \cite[Theorem 1, p.~2]{ac}. In that work the authors considered bounded harmonic symbols, whereas here we study Toeplitz operators whose symbols are biharmonic functions of the special form described above.

\begin{theorem}\label{thm2}
	The Toeplitz operators $T_{\Phi}$ and $T_{\Psi}$ commute if and only if there exist constants $C_1$ and $C_2$ such that $\Phi=C_1\Psi+C_2$.
\end{theorem}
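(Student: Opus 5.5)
The ``if'' direction is immediate: if $\Phi=C_1\Psi+C_2$ then $T_\Phi=C_1T_\Psi+C_2I$, which commutes with $T_\Psi$. For the converse, I will compute $T_\Phi T_\Psi(z^k)$ and $T_\Psi T_\Phi(z^k)$ explicitly for all $k\ge 0$ using Lemma \ref{mellin}. Write $\Phi=\sum_n a_{1,n}r^{|n|}e^{in\theta}+\sum_n a_{2,n}r^{|n|+2}e^{in\theta}$ (negative $n$ meaning conjugate terms), and similarly for $\Psi$. Each monomial symbol $e^{\pm ip\theta}r^s$ sends $z^k$ to a rational multiple of $z^{k\pm p}$. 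The coefficient of $z^{k+n}$ in $T_\Phi T_\Psi(z^k)-T_\Psi T_\Phi(z^k)$ is therefore a finite sum of products of rational functions in $k$. This sum must vanish for all large $k$, hence identically as a rational function of $k$. (This is the step where the Mellin transform viewpoint enters: a function vanishing at infinitely many integers and analytic on a half-plane vanishes identically.) Since only finitely many degrees occur, I get a finite system of rational identities in $k$, indexed by the shift $n$ with $-(m+q)\le n\le N+P$, where $N=\max N_i$ and so on.

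Next I will examine extreme shifts first. For $n$ equal to the top total degree (e.g. $\max(N_1,N_2)+\max(P_1,P_2)$), only leading analytic coefficients contribute. For the term $T_{e^{ip\theta}r^s}T_{e^{ip'\theta}r^{s'}}$, the factor is
\[\frac{2k+2p'+2}{2k+p'+s'+2}\cdot\frac{2k+2p'+2p+2}{2k+2p'+p+s+2}.\]
Comparing the poles of these rational functions in $k$ (they are at distinct locations depending on $s=p$ or $p+2$ and on the order $p,p'$) will force relations. The first goal is to show that $N_1=N_2$ cannot both appear independently in $\Phi$ unless matched in $\Psi$. Concretely, the aim is that the leading analytic parts are proportional, and likewise the leading anti-analytic parts, with the same constant $C_1$. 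I then intend to induct downward on the shift $n$. At each stage I subtract $C_1\Psi$ from $\Phi$ (which preserves commutativity) and show that the next coefficient of the difference vanishes. This continues until $\Phi-C_1\Psi$ is a constant. Residues at the poles $k=-(p+s+2)/2$ should isolate individual coefficient products.

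The main obstacle is the mixing between the two layers $\phi_1$ and $|z|^2\phi_2$. A monomial $r^{|n|}e^{in\theta}$ and $r^{|n|+2}e^{in\theta}$ produce the same shift but different denominators, and the cross terms coupling analytic with anti-analytic parts produce products of factors like $\frac{2k-2p+2}{2k-p+s+2}$ whose poles may coincide for different pairs. My first attempt will be a partial-fraction decomposition in $k$: match the residues at each simple pole, tracking which pairs $(p,s)$ can produce a given pole. If coincidences prevent separation, I will fall back on comparing asymptotic expansions in $1/k$ to second order. This should separate the $r^{p}$ from the $r^{p+2}$ contributions. I would also handle separately the degenerate cases where degrees of $f_1,f_2$ (or $h_1,h_2$) differ.
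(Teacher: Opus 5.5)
\textbf{Verdict: genuine gap.} Your framework is the paper's: read the coefficient of $z^{k+n}$ in $(T_\Phi T_\Psi-T_\Psi T_\Phi)z^k$ as a rational function of $k$, compare poles, and start from the extreme shifts. But the proposal stops where the actual work begins, and its central claim cannot be reached by the extreme-shift analysis you describe.

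\textbf{Gap 1: the common constant.} At shifts above $\max(N_1,N_2,P_1,P_2)$ only products of two analytic coefficients occur. After taking adjoints, at the opposite extreme only products of two coanalytic coefficients occur. So at best these shifts give $f_i-a_{i,0}=C_1(h_i-b_{i,0})$ and $g_i-a_{i,-0}=C_1'(k_i-b_{i,-0})$, with two unrelated constants. A symbol $\Phi=C_1\Psi_{+}+\overline{C_1'}\,\Psi_{-}+\rho$, with $\rho$ radial and $\Psi_{\pm}$ the analytic and coanalytic parts of $\Psi$, is not excluded. You assert proportionality ``with the same constant $C_1$'' but give no mechanism. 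You also list the analytic--coanalytic cross terms only as an obstacle, whereas they are exactly the tool. In the paper the link comes from the diagonal (shift $0$) coefficient. There, the products $T_{r^2\bar z^n}T_{r^2z^n}$ occurring in $T_\Phi T_\Psi$ and in $T_\Psi T_\Phi$ are the only terms with a double pole at $k=-n-2$. This gives $a_{2,n}\overline{b_{2,-n}}=b_{2,n}\overline{a_{2,-n}}$, equation \eqref{eq2}.

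\textbf{Gap 2: degree matching.} You never prove $N_i=P_i$ and $m_i=q_i$ (Lemma~\ref{lem1}), which is what makes ``the leading parts are proportional'' meaningful. Your chosen shift $\max(N_1,N_2)+\max(P_1,P_2)$ can be vacuous: if $N_1>N_2$ and $P_1>P_2$, it only carries $[T_{z^{N_1}},T_{z^{P_1}}]=0$. At the first informative shift, two different products can collide, e.g.\ $a_{1,N_1}b_{2,P_2}$ and $a_{2,N_2}b_{1,P_1}$ when $N_1+P_2=N_2+P_1$. The paper resolves this by a case analysis with separate pole arguments.

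\textbf{Gap 3: the induction.} The step ``subtract $C_1\Psi$ and repeat'' does not run as stated. After subtraction, $\Phi-C_1\Psi$ no longer has nonzero leading coefficients in its four components, so the argument that produced proportionality cannot simply be reapplied. What each step really needs is: if $T_{\Phi'}$ commutes with $T_\Psi$ and every component of $\Phi'$ has lower degree than the corresponding component of $\Psi$, then $\Phi'$ is constant. That is essentially the theorem itself.

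\textbf{Gap 4: the radial term.} The last survivor, a term $c|z|^2$, is invisible to any comparison among radial parts, since radial Toeplitz operators commute. It can only be killed by pairing it with the lowest non-radial coefficient of $\Psi$, as the paper does at shift $1$ using $b_{1,1},b_{2,1}$.

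\textbf{Gap 5: pole coincidences.} The coincidences you worry about are real: every coanalytic factor applied first produces poles at $k=-1$ or $k=-2$, whatever its degree. An expansion in $1/k$ to second order gives only two scalar equations per shift, far too few to separate the products involved. The paper instead picks, for each relation, a shift and a pole (e.g.\ $k=-s-2$ at shift $N_2+d$) to which only the intended products contribute.

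\textbf{A caution on the hypotheses.} The nonvanishing hypothesis alone is not enough when degrees may be zero. Take $f_1=1+z$, $h_1=1+z^2$, $g_1=k_1=g_2=k_2=-1$ and $f_2=h_2=1$. Then $\Phi=z$ and $\Psi=z^2$, all stated leading coefficients are nonzero, and $T_\Phi$, $T_\Psi$ commute, yet $\Phi\neq C_1\Psi+C_2$. So a correct argument must visibly use nonconstancy of the components, which your outline never invokes.
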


\section{Key Lemma}
The following lemma is crucial to the proof of our main result. It establishes that if $T_{\Phi}$ and $T_{\Psi}$ commute, then the degrees of the analytic and anti-analytic parts in the Almansi decomposition of their symbols must coincide (i.e., $N_i=P_i$ and $m_i=q_i$ for $i=1,2$).
\begin{lemma}\label{lem1}
If $T_{\Phi}$ commutes with $T_{\Psi}$, then  $N_i=P_i$ and $m_i=q_i$ for $i=1,2$.
\end{lemma}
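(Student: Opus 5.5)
The plan is to test the commutation relation on the monomials $z^k$, using Lemma~\ref{mellin}, and to compare coefficients of the highest powers of $z$ as rational functions of $k$.

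\emph{Action of $T_\Phi$ on monomials.} Write $a_{i,n}z^n=a_{i,n}r^ne^{in\theta}$ and $|z|^2a_{i,n}z^n=a_{i,n}r^{n+2}e^{in\theta}$, and similarly for the conjugate terms. Lemma~\ref{mellin} gives
$$T_\Phi(z^k)=\sum_{n\ge 0}\alpha_n(k)\,z^{k+n}+\sum_{n\ge 1,\ n\le k}\gamma_n(k)\,z^{k-n},$$
where
$$\alpha_n(k)=a_{1,n}+a_{2,n}\frac{k+n+1}{k+n+2},\qquad \gamma_n(k)=(k-n+1)\Big(\frac{\overline{a_{1,-n}}}{k+1}+\frac{\overline{a_{2,-n}}}{k+2}\Big),$$
with the convention that coefficients beyond the degrees vanish. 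The same formula holds for $T_\Psi$, with $\beta_n,\delta_n$ built from the $b_{i,\pm n}$.

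Each $\alpha_n$ extends to a rational function $\alpha_n(w)=A_n+B_n/(w+n+2)$ with $A_n=a_{1,n}+a_{2,n}$ and $B_n=-a_{2,n}$. Thus $\alpha_n$ has a genuine pole at $w=-n-2$ exactly when $a_{2,n}\neq 0$, and it is nonzero at infinity exactly when $A_n\ne0$. Since the commutation identity holds for all integers $k\ge 0$ and both sides are rational in $k$, it holds identically in $w$. This lets me compare poles and behaviour at infinity, which is the Mellin-transform viewpoint of the paper.

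\emph{Comparing coefficients.} Put $N=\max(N_1,N_2)$ and $P=\max(P_1,P_2)$. The coefficient of $z^{k+N+P}$ in $T_\Phi T_\Psi z^k - T_\Psi T_\Phi z^k$ gives
$$\alpha_N(w+P)\beta_P(w)=\beta_P(w+N)\alpha_N(w).$$
Next I descend through the coefficients of $z^{k+N+P-j}$, $j=1,2,\dots$. These involve the products $\alpha_{N-s}\beta_{P-t}$ with $s+t=j$, and also mixed products such as $\alpha_{n}\delta_{p}$ once $j$ is large enough. For each identity I match the poles at $w=-N-P-2$, $w=-N-2$, $w=-P-2$ and so on, as well as the constant terms at infinity. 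The pole at $-N-P-2$ appears on the left only through $\alpha_N(w+P)$ and on the right only through $\beta_P(w+N)$. Matching residues therefore forces $a_{2,N}\ne0 \iff b_{2,P}\ne0$, with proportional residues.

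The cases are then as follows.
\begin{itemize}
\item If $N_1>N_2$, then $\alpha_N=a_{1,N}$ is a nonzero constant. The first identity reduces to $\beta_P(w)=\beta_P(w+N)$, so $\beta_P$ is constant, and hence $P_1>P_2$ as well.
\item If $N_1\le N_2$, the pole structure forces $P_1\le P_2$ in the same way.
\end{itemize}
Having fixed which part carries the top degree, I will show that $N_2=P_2$ and then $N_1=P_1$. The idea is to look at the first index at which the rational coefficient of the "radial" part $a_{2,n}$ stops vanishing. In the case $N_1>N_2$, this is the coefficient of $z^{k+N_2+P}$, which by the previous step must be compared with the coefficient of $z^{k+N+P_2}$. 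Both contribute a pole at the same place only if $N_2-N_1=P_2-P_1$, and combining this with the top-degree balance gives the equalities.

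\emph{Anti-analytic degrees.} For $m_i=q_i$ I take adjoints. Since $T_\Phi^*=T_{\overline\Phi}$ and adjoints of commuting operators commute, $T_{\overline\Phi}$ commutes with $T_{\overline\Psi}$. Here $\overline\Phi=\overline{\phi_1}+|z|^2\overline{\phi_2}$ has the same form with the roles of $f_i$ and $g_i$ interchanged. The analytic-degree result applied to $\overline\Phi,\overline\Psi$ therefore yields $m_i=q_i$.

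\emph{Main obstacle.} I expect the hardest step to be the descent argument. Lower coefficients mix contributions from both $\phi_1$ and $\phi_2$, and, once $j$ exceeds the anti-analytic degrees, also contributions from the $\gamma_n,\delta_n$. Isolating the one identity that separates $N_1$ from $N_2$ (respectively $P_1$ from $P_2$) requires choosing the right power of $z$ and the right pole to inspect. My first attempt will use the largest possible $k$-dependent pole, namely the one at $-N-P-2$ and its shifts. I will then argue by induction on $j$ that all earlier coefficient identities force the proportionality $\alpha_{N-s}\propto\beta_{P-s}$. This proportionality is what finally pins down the degrees.
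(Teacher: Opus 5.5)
\textbf{There is a genuine gap, and it cannot be closed: the statement is false in the regime you leave open.}

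\textbf{What is right.} Your top-degree step is correct and cleaner than the paper's case split. The coefficient of $z^{k+N+P}$ does show that $N_1>N_2$ forces $P_1>P_2$, and that $N_1\le N_2$ forces $P_1\le P_2$. In the first regime, the paper's comparison of the coefficients of $z^{k+N_1+P_2}$ and $z^{k+N_2+P_1}$ finishes the job. One caveat there: your $\alpha_0$ must also contain $\overline{a_{1,-0}}+\overline{a_{2,-0}}\frac{k+1}{k+2}$. So its pole at $-2$ is governed by $a_{2,0}+\overline{a_{2,-0}}$, which can vanish.

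\textbf{Where it fails.} The gap is the regime $N_1\le N_2$, $P_1\le P_2$, which you defer to an unspecified descent. There the identity $\alpha_N(w+P)\beta_P(w)=\beta_P(w+N)\alpha_N(w)$ does not force $N=P$. It is satisfied by $\alpha_N(w)=c\,\frac{w+2}{w+N+2}$ and $\beta_P(w)=d\,\frac{w+2}{w+P+2}$ for all $N,P$. In your notation this means $a_{1,N}=c(1-N)$ and $a_{2,N}=cN$, which is possible exactly when $N_1=N_2\ge 2$ or $N_1<N_2=1$.

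These are the weights of powers of $S=T_{|z|^2z}$. By Lemma~\ref{mellin}, $Sz^k=\frac{k+2}{k+3}z^{k+1}$, hence $S^nz^k=\frac{k+2}{k+n+2}z^{k+n}=T_{(1-n)z^n+n|z|^2z^n}z^k$.

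Concretely, take $f_1=g_1=g_2=k_1=k_2=1$, $f_2=z-1$, $h_1=-z^2$, $h_2=2z^2-1$. All leading coefficients are nonzero, and $\Phi=2+|z|^2z$, $\Psi=1-z^2+2|z|^2z^2$. Then $T_\Phi=2I+S$ and $T_\Psi=I+S^2$ commute. Yet $N_1=0\ne 2=P_1$ and $N_2=1\ne 2=P_2$. Using $S^2$ and $S^3$ instead gives a counterexample with $N_1=N_2=2$ and $P_1=P_2=3$.

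\textbf{Consequences.} The lemma, and with it Theorem~\ref{thm2}, is false as stated. No induction producing $\alpha_{N-s}\propto\beta_{P-s}$ can succeed: in the example, $\alpha_1\propto\frac{w+2}{w+3}$ while $\beta_2\propto\frac{w+2}{w+4}$.

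The paper's own proof breaks at the same place. It never treats $N_1=N_2$. In its case $N_1<N_2$, $P_1=P_2$, it claims $a_{2,N_2}b_{1,P_1}$ and $a_{2,N_2}b_{2,P_2}$ must both vanish, and this fails when $N_2=1$. The factor $\frac{2k+2N_2+2P_2+2}{2k+2N_2+2P_2+4}$ then vanishes at $k=-P_2-2$. This cancels the right-hand pole at $k=-P_2-2$, which is the only pole distinguishing the two sides. With the data above, both sides equal $-\frac{2}{(k+3)(k+5)}$.

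A correct version needs an extra hypothesis excluding such shift-power configurations. Any proof would have to use that hypothesis exactly in the regime you left open.
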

\begin{proof}
Since $T_{\Phi}T_{\Psi}=T_{\Psi}T_{\Phi}$, we have that  for all $k\geq 0$ 
\begin{align*}
 & \left(T_{\phi_1}T_{\psi_1}+T_{\phi_1}T_{r^2\psi_2}+T_{r^2\phi_2}T_{\psi_1}+T_{r^2\phi_2}T_{r^2\psi_2}\right)(z^k) \\
&= \left(T_{\psi_1}T_{\phi_1}+T_{r^2\psi_2}T_{\phi_1}+T_{\psi_1}T_{r^2\phi_2}+T_{r^2\psi_2}T_{r^2\phi_2}\right)(z^k).
\end{align*}
Thus, for all $k\geq 0$, we have
\begin{align}\label{eq1}
 &\nonumber \big(T_{f_1}T_{\overline{k_1}}+T_{\overline{g_1}}T_{h_1} + T_{f_1}T_{r^2\overline{k_2}}+T_{f_1}T_{r^2h_2}+T_{\overline{g_1}}T_{r^2\overline{k_2}}+T_{\overline{g_1}}T_{r^2h_2} + T_{r^2f_2}T_{\overline{k_1}}\\&\nonumber+T_{r^2f_2}T_{h_1}
 +T_{r^2\overline{g_2}}T_{\overline{k_1}}+T_{r^2\overline{g_2}}T_{h_1} + T_{r^2f_2}T_{r^2\overline{k_2}}+T_{r^2f_2}T_{r^2h_2}+T_{r^2\overline{g_2}}T_{r^2\overline{k_2}}\\&\nonumber+T_{r^2\overline{g_2}}T_{r^2h_2}\big)(z^k) 
 =  \big(T_{\overline{k_1}}T_{f_1}+T_{h_1}T_{\overline{g_1}} + T_{r^2\overline{k_2}}T_{f_1}+T_{r^2h_2}T_{f_1}+T_{r^2\overline{k_2}}T_{\overline{g_1}}\\&\nonumber+T_{r^2h_2}T_{\overline{g_1}} + T_{\overline{k_1}}T_{r^2f_2}+T_{h_1}T_{r^2f_2}
 +T_{\overline{k_1}}T_{r^2\overline{g_2}}+T_{h_1}T_{r^2\overline{g_2}} + T_{r^2\overline{k_2}}T_{r^2f_2}\\&+T_{r^2h_2}T_{r^2f_2}+T_{r^2\overline{k_2}}T_{r^2\overline{g_2}}+T_{r^2h_2}T_{r^2\overline{g_2}}\big)(z^k).
\end{align}
We begin by assuming that $N_1>N_2$ and show that this leads to a contradiction. Under this assumption, we distinguish the following cases:
\begin{itemize}
\item[]{\bf{Case $P_2\geq P_1$:}}
In equation \eqref{eq1}, the term $z^{k+N_1+P_2}$  comes only from
\begin{equation*}
T_{a_{1,N_1}z^{N_1}}T_{b_{2,P_2}r^2z^{P_2}}(z^k)=T_{b_{2,P_2}r^2z^{P_2}}T_{a_{1,N_1}z^{N_1}}(z^k).
\end{equation*}
Since $a_{1,N_1}b_{2,P_2}\neq 0$, we obtain that $T_{a_{1,N_1}z^{N_1}}$ commutes with $T_{b_{2,P_2}r^2z^{P_2}}$. This is a contradiction, since a Toeplitz operator with an analytic symbol commutes only with Toeplitz operators whose symbols are also analytic.
\item[]{\bf{Case $P_2< P_1$}:} If $N_2+P_1>N_1+P_2$ or $N_2+P_1<N_1+P_2$, then we have a contradiction by same argument as in the previous case.
Now, if $N_2+P_1=N_1+P_2$, then for all $k\geq 0$
\begin{eqnarray*}
\left(T_{a_{1,N_1}z^{N_1}}T_{b_{2,P_2}r^2z^{P_2}}+T_{a_{2,N_2}r^2z^{N_2}}T_{b_{1,P_1}z^{P_1}}\right) (z^k)&=&\Big(T_{b_{2,P_2}r^2z^{P_2}}T_{a_{1,N_1}z^{N_1}}\\&+&T_{b_{1,P_1}z^{P_1}}T_{a_{2,N_2}r^2z^{N_2}}\Big)(z^k).
\end{eqnarray*}
Using Lemma \ref{mellin}, for all $k\geq 0$ we obtain that
\begin{align*}
& a_{1,N_1}b_{2,P_2}\frac{2k+2P_2+2}{2k+2P_2+4}+a_{2,N_2}b_{1,P_1}\frac{2k+2P_1+2N_2+2}{2k+2P_1+2N_2+4} \\
&= a_{1,N_1}b_{2,P_2}\frac{2k+2P_2+2N_1+2}{2k+2P_2+2N_1+4}+a_{2,N_2}b_{1,P_1}\frac{2k+2N_2+2}{2k+2N_2+4},
\end{align*}
which is equivalent to
\begin{align*}
& a_{1,N_1}b_{2,P_2}\left[\frac{2k+2P_2+2}{2k+2P_2+4} - \frac{2k+2P_2+2N_1+2}{2k+2P_2+2N_1+4}\right]  \\
&=  a_{2,N_2}b_{1,P_1}\left[\frac{2k+2N_2+2}{2k+2N_2+4} - \frac{2k+2P_1+2N_2+2}{2k+2P_1+2N_2+4}\right].
\end{align*}
By examining the pole $-2P_2-4$ on the left-hand side of the equation above, we deduce that  $N_2=P_2$. Consequently, it follows that $N_1=P_1$ and that $a_{1,N_1}b_{2,P_2}=a_{2,N_2}b_{1,P_1}$.
\end{itemize}
We now assume that $N_1<N_2$. Under this assumption, we have the following cases:
\begin{itemize}
\item[]{\bf{Case $P_1> P_2$}:} In equation \eqref{eq1}, the term $z^{k+N_2+P_1}$ comes only from
\begin{equation*}
T_{a_{2,N_2}r^2z^{N_2}}T_{b_{1,P_1}z^{P_1}}(z^k)= T_{b_{1,P_1}z^{P_1}}T_{a_{2,N_2}r^2z^{N_2}}(z^k).
\end{equation*}
Since Toeplitz operators with analytic symbols commute only with other such operators \cite{acr}, the equation above holds if and only if  
 $a_{2,N_2}b_{1,P_1}=0$. This  contradicts our assumption that $a_{2,N_2}b_{1,P_1}\neq 0$.
\item[]{\bf{Case} $P_1= P_2$:} In equation \eqref{eq1}, the term $z^{k+N_2+P_2}=z^{k+N_2+P_1}$  comes only from
\begin{eqnarray*}
\left(T_{a_{2,N_2}r^2z^{N_2}}T_{b_{1,P_1}z^{P_1}}+T_{a_{2,N_2}r^2z^{N_2}}T_{b_{2,P_2}r^2z^{P_2}}\right)(z^k)=\Big(T_{b_{1,P_1}z^{P_1}}T_{a_{2,N_2}r^2z^{N_2}}\\+T_{b_{2,P_2}r^2z^{P_2}}T_{a_{2,N_2}r^2z^{N_2}}\Big)(z^k).
\end{eqnarray*}
Using Lemma \ref{mellin}, for all $k\geq 0$ we have 
\begin{align*}
&  a_{2,N_2}b_{1,P_1}\left[\frac{2k+2P_1+2N_2+2}{2k+2P_1+2N_2+4}- \frac{2k+2N_2+2}{2k+2N_2+4}\right] \\ 
&= a_{2,N_2}b_{2,P_2}\frac{2k+2N_2+2P_2+2}{2k+2N_2+2P_2+4}\left[\frac{2k+2N_2+2}{2k+2N_2+4}-\frac{2k+2P_2+2}{2k+2P_2+4}\right].
\end{align*}
This equality holds only if $a_{2,N_2}b_{2,P_2}=0$ and $a_{2, N_2}b_{1, P_1}=0$, which contradicts our assumption that $a_{2,N_2}b_{1,P_1}b_{2,P_2}\neq 0$.
\item[]{\bf{Case} $P_2> P_1$:} In equation \eqref{eq1}, the term $z^{k+N_2+P_2}$ comes only from
\begin{equation*}
T_{a_{2,N_2}r^2z^{N_2}}T_{b_{2,P_2}r^2z^{P_2}}(z^k)= T_{b_{2,P_2}r^2z^{P_2}}T_{a_{2,N_2}r^2z^{N_2}}(z^k).
\end{equation*}
Lemma \ref{mellin} implies that all $k\geq 0$, we have
\begin{equation*}
 a_{2,N_2}b_{2,P_2} \frac{2k+2P_2+2}{2k+2P_2+4} = a_{2,N_2}b_{2,P_2} \frac{2k+2N_2+2}{2k+2N_2+4}.
\end{equation*}
Since $a_{2,N_2}b_{2,P_2}\neq 0$, we must have $N_2=P_2$. We now show that this also implies $P_1=N_1$. To this end, we consider the following possibilities:
\begin{itemize}
\item[(1)] If $P_1>N_1$ (so here we have $N_1<P_1<N_2=P_2$), then the term $z^{k+N_2+P_1}$ in equation \eqref{eq1} comes only from
\begin{align*}
& \left(T_{a_{2,N_2}r^2z^{N_2}}T_{b_{1,P_1}z^{P_1}}+\sum_{s=P_1}^{N_2} T_{a_{2,N_2+P_1-s}r^2z^{N_2+P_1-s}}T_{b_{2,s}r^2z^{s}}\right)(z^k) \\
&= \left(T_{b_{1,P_1}z^{P_1}}T_{a_{2,N_2}r^2z^{N_2}}+\sum_{s=P_1}^{N_2} T_{b_{2,s}r^2z^{s}}T_{a_{2,N_2+P_1-s}r^2z^{N_2+P_1-s}}\right)(z^k).
\end{align*}
Using Lemma \ref{mellin}, for all $k\geq 0$, we obtain that 
\begin{align*}
& a_{2,N_2}b_{1,P_1}\frac{2k+2P_1+2N_2+2}{2k+2P_1+2N_2+4}\\&+\frac{2k+2N_2+2P_1+2}{2k+2N_2+2P_1+4}\sum_{s=P_1}^{N_2} a_{2,N_2+P_1-s}b_{2,s}\frac{2k+2s+2}{2k+2s+4} \\
&= a_{2,N_2}b_{1,P_1}\frac{2k+2N_2+2}{2k+2N_2+4}+\frac{2k+2N_2+2P_1+2}{2k+2N_2+2P_1+4}\sum_{s=P_1}^{N_2} a_{2,s}b_{2,N_2+P_1-s}\frac{2k+2s+2}{2k+2s+4},
\end{align*}
which is equivalent to
\begin{align*}
& a_{2,N_2}b_{1,P_1}\left[\frac{2k+2P_1+2N_2+2}{2k+2P_1+2N_2+4}-\frac{2k+2N_2+2}{2k+2N_2+4}\right] \\
&= \frac{2k+2N_2+2P_1+2}{2k+2N_2+2P_1+4}\sum_{s=P_1}^{N_2} \frac{2k+2s+2}{2k+2s+4}\left(a_{2,s}b_{2,N_2+P_1-s}-a_{2,N_2+P_1-s}b_{2,s}\right).
\end{align*}
Observe that, for $s=P_1,\ldots, N_2-1$, the poles $-2s-4$ on the right-hand side of the equation above  do not appear on the left-hand side. Thus, we must have 
 $$a_{2,s}b_{2,N_2+P_1-s}=a_{2,N_2+P_1-s}b_{2,s}, \textrm{ for all }s=P_1,\ldots,N_2-1.$$ Consequently, the previous equation becomes 
\begin{align*}
& a_{2,N_2}b_{1,P_1}\left[\frac{2k+2P_1+2N_2+2}{2k+2P_1+2N_2+4}-\frac{2k+2N_2+2}{2k+2N_2+4}\right] \\
&= \frac{2k+2N_2+2P_1+2}{2k+2N_2+2P_1+4}\cdot \frac{2k+2N_2+2}{2k+2N_2+4}\left(b_{2,P_1}a_{2,N_2}-a_{2,P_1}b_{2,N_2}\right).
\end{align*}
By taking the limit on both sides as $k\to\infty$, we see that $$0=b_{2,P_1}a_{2,N_2}-a_{2,P_1}b_{2,N_2}.$$ Substituting this back into the  equation, we obtain that  $a_{2,N_2}b_{1,P_1}=0$, which contradicts our assumption that  $a_{2,N_2}b_{1,P_1}\neq 0$.

\item[(2)] If $P_1<N_1$ (so here we have $P_1<N_1<N_2=P_2$), then the term $z^{k+N_2+N_1}=z^{k+N_1+P_2}$ in equation \eqref{eq1} comes only from
\begin{align*}
& \left(T_{a_{1,N_1}z^{N_1}}T_{b_{2,N_2}r^2z^{N_2}}+\sum_{s=N_1}^{N_2} T_{a_{2,N_2+N_1-s}r^2z^{N_2+N_1-s}}T_{b_{2,s}r^2z^{s}}\right)(z^k) \\
&= \left(T_{b_{2,N_2}r^2z^{N_2}}T_{a_{1,N_1}z^{N_1}}+\sum_{s=N_1}^{N_2} T_{b_{2,s}r^2z^{s}}T_{a_{2,N_2+N_1-s}r^2z^{N_2+N_1-s}}\right)(z^k).
\end{align*}
Lemma \ref{mellin} implies that fo all $k\geq 0$, we have
\begin{align*}
& a_{1,N_1}b_{2,N_2}\frac{2k+2N_2+2}{2k+2N_2+4}+\frac{2k+2N_2+2N_1+2}{2k+2N_2+2N_1+4}\sum_{s=N_1}^{N_2} a_{2,N_2+N_1-s}b_{2,s}\frac{2k+2s+2}{2k+2s+4}\\
&=  a_{1,N_1}b_{2,N_2}\frac{2k+2N_1+2N_2+2}{2k+2N_1+2N_2+4} \\
&+\frac{2k+2N_2+2N_1+2}{2k+2N_2+2N_1+4}\sum_{s=N_1}^{N_2} a_{2,s}b_{2,N_2+N_1-s}\frac{2k+2s+2}{2k+2s+4}, 
\end{align*}
which is equivalent to
\begin{align*}
& a_{1,N_1}b_{2,N_2}\left[\frac{2k+2N_2+2}{2k+2N_2+4}-\frac{2k+2N_1+2N_2+2}{2k+2N_1+2N_2+4}\right] \\
&= \frac{2k+2N_2+2N_1+2}{2k+2N_2+2N_1+4}\sum_{s=N_1}^{N_2} \frac{2k+2s+2}{2k+2s+4}\left(a_{2,s}b_{2,N_2+N_1-s}- a_{2,N_2+N_1-s}b_{2,s}\right), 
\end{align*}
Observe that, for $s=N_1,\ldots, N_2-1$, the poles $-2s-4$ on the right-hand side of the equation above  do not appear on the left-hand side. Thus, we must have 
$$a_{2,s}b_{2,N_2+N_1-s}=a_{2,N_2+N_1-s}b_{2,s}, \textrm{ for all }s=N_1,\ldots,N_2-1.$$
Consequently, the previous equation becomes
\begin{align*}
& a_{1,N_1}b_{2,N_2}\left[\frac{2k+2N_2+2}{2k+2N_2+4}-\frac{2k+2N_1+2N_2+2}{2k+2N_1+2N_2+4}\right] \\
&= \frac{2k+2N_2+2N_1+2}{2k+2N_2+2N_1+4}\cdot\frac{2k+2N_2+2}{2k+2N_2+4}\left(a_{2,N_2}b_{2,N_1}- a_{2,N_1}b_{2,N_2}\right).
\end{align*}
Similar argument as in the previous situation yields 
 $a_{1,N_1}b_{2,N_2}=0$, which contradicts our assumption that $a_{1,N_1}b_{2,N_2}\neq 0$. 
\end{itemize}
Therefore, we must have $N_1=P_1$.
\end{itemize}
Finally, by taking the adjoint of both sides of equation \eqref{eq1}, the terms with the highest power of $\bar{z}$ become the terms of the highest power of $z$. Applying the same argument as above, with the $m_i$'s and $q_i$'s playing the role of the $N_i$'s and $P_i$'s respectively, we obtain $m_1=q_1$ and $m_2=q_2$.
\end{proof}

\section{Proof of the main result}
If $T_{\Phi}$ commutes with $T_{\Psi}$, then Lemma \ref{lem1} implies 
\begin{equation}
f_i(z)=\sum_{n=0}^{N_i}a_{i,n}z^n,\  g_i(z)=\sum_{n=0}^{m_i}a_{i,-n}z^n,\  h_i(z)=\sum_{n=0}^{N_i}b_{i,n}z^n,\  k_i(z)=\sum_{n=0}^{m_i}b_{i,-n}z^n,
\end{equation}
where $a_{i,N_i}, a_{i,-m_i},b_{i,N_i}$, $b_{i,-m_i}$ are nonzero coefficients for all $i$. Using equation \eqref{eq1}, the terms in $z^k$ come only from
\begin{align*}
&\Bigg[ \sum_{n=0}^{\min(N_1,m_1)} a_{1,n}\overline{b}_{1,-n}T_{z^{n}}T_{\overline{z}^{n}}+\sum_{n=0}^{\min(N_1,m_1)} b_{1,n}\overline{a}_{1,-n}T_{\overline{z}^{n}}T_{z^{n}}\\&+\sum_{n=0}^{\min(N_1,m_2)} a_{1,n}\overline{b}_{2,-n}T_{z^{n}}T_{r^2\overline{z}^{n}} 
+\sum_{n=0}^{\min(N_2,m_1)} b_{2,n}\overline{a}_{1,-n}T_{\overline{z}^{n}}T_{r^2z^{n}}\\& + \sum_{n=0}^{\min(N_2,m_1)} a_{2,n}\overline{b}_{1,-n}T_{r^2z^{n}}T_{\overline{z}^{n}}+\sum_{n=0}^{\min(N_1,m_2)} b_{1,n}\overline{a}_{2,-n}T_{r^2\overline{z}^{n}}T_{z^{n}}\\ 
&+\sum_{n=0}^{\min(N_2,m_2)} a_{2,n}\overline{b}_{2,-n}T_{r^2z^{n}}T_{r^2\overline{z}^{n}}+\sum_{n=0}^{\min(N_2,m_2)} b_{2,n}\overline{a}_{2,-n}T_{r^2\overline{z}^{n}}T_{r^2z^{n}}\Bigg](z^k) \\
&= \Bigg[\sum_{n=0}^{\min(N_1,m_1)} a_{1,n}\overline{b}_{1,-n}T_{\overline{z}^{n}}T_{z^{n}}+\sum_{n=0}^{\min(N_1,m_1)} b_{1,n}\overline{a}_{1,-n}T_{z^{n}}T_{\overline{z}^{n}}\\&+\sum_{n=0}^{\min(N_1,m_2)} a_{1,n}\overline{b}_{2,-n}T_{r^2\overline{z}^{n}}T_{z^{n}} 
+\sum_{n=0}^{\min(N_2,m_1)} b_{2,n}\overline{a}_{1,-n}T_{r^2z^{n}}T_{\overline{z}^{n}}\\& + \sum_{n=0}^{\min(N_2,m_1)} a_{2,n}\overline{b}_{1,-n}T_{\overline{z}^{n}}T_{r^2z^{n}}+\sum_{n=0}^{\min(N_1,m_2)} b_{1,n}\overline{a}_{2,-n}T_{z^{n}}T_{r^2\overline{z}^{n}} \\
&+\sum_{n=0}^{\min(N_2,m_2)} a_{2,n}\overline{b}_{2,-n}T_{r^2\overline{z}^{n}}T_{r^2z^{n}}+\sum_{n=0}^{\min(N_2,m_2)} b_{2,n}\overline{a}_{2,-n}T_{r^2z^{n}}T_{r^2\overline{z}^{n}}\Bigg](z^k).
\end{align*}
Thus for  $k\geq 0$ large enough, Lemma \ref{mellin} implies 
\begin{eqnarray*}
& \displaystyle{\sum_{n=0}^{\min(N_1,m_1)} a_{1,n}\overline{b}_{1,-n}\frac{2k-2n+2}{2k+2} +\sum_{n=0}^{\min(N_1,m_1)} b_{1,n}\overline{a}_{1,-n}\frac{2k+2}{2k+2n+2}} \\ &\displaystyle{+\sum_{n=0}^{\min(N_1,m_2)} a_{1,n}\overline{b}_{2,-n}\frac{2k-2n+2}{2k+4} 
+\sum_{n=0}^{\min(N_2,m_1)} b_{2,n}\overline{a}_{1,-n}\frac{2k+2}{2k+2n+4}}\\&\displaystyle{ + \sum_{n=0}^{\min(N_2,m_1)} a_{2,n}\overline{b}_{1,-n}\frac{2k-2n+2}{2k+4}+\sum_{n=0}^{\min(N_1,m_2)} b_{1,n}\overline{a}_{2,-n}\frac{2k+2}{2k+2n+4} }\\
&\displaystyle{+\sum_{n=0}^{\min(N_2,m_2)} a_{2,n}\overline{b}_{2,-n}\frac{(2k-2n+2)(2k+2)}{(2k+4)^2}+\sum_{n=0}^{\min(N_2,m_2)} b_{2,n}\overline{a}_{2,-n}\frac{(2k+2n+2)(2k+2)}{(2k+2n+4)^2}} \\
&\displaystyle{= \sum_{n=0}^{\min(N_1,m_1)} a_{1,n}\overline{b}_{1,-n}\frac{2k+2}{2k+2n+2} +\sum_{n=0}^{\min(N_1,m_1)} b_{1,n}\overline{a}_{1,-n}\frac{2k-2n+2}{2k+2} }\\&\displaystyle{+\sum_{n=0}^{\min(N_1,m_2)} a_{1,n}\overline{b}_{2,-n}\frac{2k+2}{2k+2n+4} 
+\sum_{n=0}^{\min(N_2,m_1)} b_{2,n}\overline{a}_{1,-n}\frac{2k-2n+2}{2k+4}}\\& \displaystyle{+ \sum_{n=0}^{\min(N_2,m_1)} a_{2,n}\overline{b}_{1,-n}\frac{2k+2}{2k+2n+4}+\sum_{n=0}^{\min(N_1,m_2)} b_{1,n}\overline{a}_{2,-n}\frac{2k-2n+2}{2k+4}} \\
&\displaystyle{+\sum_{n=0}^{\min(N_2,m_2)} a_{2,n}\overline{b}_{2,-n}\frac{(2k+2n+2)(2k+2)}{(2k+2n+4)^2}+\sum_{n=0}^{\min(N_2,m_2)} b_{2,n}\overline{a}_{2,-n}\frac{(2k-2n+2)(2k+2)}{(2k+4)^2}}.
\end{eqnarray*}
For $n=1,\dots, \min(N_2,m_2)$, multiply both sides by $(2k+2n+4)^2$ and then set $k=-n-2$. This yields
\begin{equation}\label{eq2}
a_{2,n}\overline{b}_{2,-n}=b_{2,n}\overline{a}_{2,-n} ,\textrm{ for all } n=1,\dots,\min(N_2,m_2).
\end{equation}
Without lost of generality, we may assume $N_2>N_1$ (refer to the Appendix for the case $N_2\leq N_1$). Then, for  $d=1,\dots,N_2-N_1$, the terms $z^{k+N_2+N_1+d}$ in equation \eqref{eq1} come only from
\begin{eqnarray*}
&\displaystyle{\left(\sum_{s=N_1+d}^{N_2} T_{a_{2,N_2+N_1+d-s}r^2z^{N_2+N_1+d-s}}T_{b_{2,s}r^2z^{s}}\right)(z^k)}\\& = \displaystyle{\left(\sum_{s=N_1+d}^{N_2} T_{b_{2,s}r^2z^{s}}T_{a_{2,N_2+N_1+d-s}r^2z^{N_2+N_1+d-s}}\right)}(z^k),
\end{eqnarray*}
and so Lemma \ref{mellin} implies
\begin{align*}
& \frac{2k+2N_2+2N_1+2d+2}{2k+2N_2+2N_1+2d+4}\sum_{s=N_1+d}^{N_2} a_{2,N_2+N_1+d-s}b_{2,s}\frac{2k+2s+2}{2k+2s+4}\\
&=  \frac{2k+2N_2+2N_1+2d+2}{2k+2N_2+2N_1+2d+4}\sum_{s=N_1+d}^{N_2} a_{2,s}b_{2,N_2+N_1+d-s}\frac{2k+2s+2}{2k+2s+4}. 
\end{align*}
Thus,  for all $s=N_1+d,\dots,N_2$ and   $d=1,\dots,N_2-N_1$, we have $$a_{2,N_2+N_1+d-s}b_{2,s}=a_{2,s}b_{2,N_2+N_1+d-s}.$$  In particular, for $s=N_2$, we have 
\begin{equation}\label{eq3}
a_{2,N_1+d}b_{2,N_2}=a_{2,N_2}b_{2,N_1+d}\mbox{ for all } d=1,\dots,N_2-N_1,
\end{equation} 
or
\begin{equation}\label{extra}
	a_{2,s}b_{2,N_2}=a_{2,N_2}b_{2,s},\mbox{ for all } s=N_1+1,\dots,N_2.
\end{equation} 
Now, for $d=1,\cdots,N_1$, the terms in $z^{k+N_2+d}$  come only from
\begin{align*}
& \Bigg(\sum_{s=N_2-N_1+d}^{N_2} T_{a_{1,N_2+d-s}z^{N_2+d-s}}T_{b_{2,s}r^2z^{s}} +\sum_{s=N_2-N_1+d}^{N_2} T_{a_{2,s}r^2z^{s}}T_{b_{1,N_2+d-s}z^{N_2+d-s}} \\
	&+\sum_{s=d}^{N_2} T_{a_{2,N_2+d-s}r^2z^{N_2+d-s}}T_{b_{2,s}r^2z^{s}} \Bigg)(z^k)\\
	&=\Bigg(\sum_{s=N_2-N_1+d}^{N_2} T_{b_{2,s}r^2z^{s}}T_{a_{1,N_2+d-s}z^{N_2+d-s}} +\sum_{s=N_2-N_1+d}^{N_2} T_{b_{1,N_2+d-s}z^{N_2+d-s}}T_{a_{2,s}r^2z^{s}} \\
	&+\sum_{s=d}^{N_2} T_{b_{2,s}r^2z^{s}}T_{a_{2,N_2+d-s}r^2z^{N_2+d-s}}\Bigg)(z^k).
\end{align*}
Consequently, Lemma \ref{mellin} implies
\begin{align*}
& \sum_{s=N_2-N_1+d}^{N_2} a_{1,N_2+d-s}b_{2,s}\frac{2k+2s+2}{2k+2s+4}\\& +\frac{2k+2N_2+2d+2}{2k+2N_2+2d+4}\sum_{s=N_2-N_1+d}^{N_2} a_{2,s}b_{1,N_2+d-s} \\
	&+\frac{2k+2N_2+2d+2}{2k+2N_2+2d+4}\sum_{s=d}^{N_2} a_{2,N_2+d-s}b_{2,s}\frac{2k+2s+2}{2k+2s+4} \\
	&= \frac{2k+2N_2+2d+2}{2k+2N_2+2d+4}\sum_{s=N_2-N_1+d}^{N_2} a_{1,N_2+d-s}b_{2,s}\\& +\sum_{s=N_2-N_1+d}^{N_2} a_{2,s}b_{1,N_2+d-s}\frac{2k+2s+2}{2k+2s+4} \\
	&+\frac{2k+2N_2+2d+2}{2k+2N_2+2d+4}\sum_{s=d}^{N_2} a_{2,N_2+d-s}b_{2,s}\frac{2k+2N_2+2d-2s+2}{2k+2N_2+2d-2s+4}.
\end{align*}
Thus
\begin{align*}
& \sum_{s=N_2-N_1+d}^{N_2} a_{1,N_2+d-s}b_{2,s}\frac{2k+2s+2}{2k+2s+4}\\& +\frac{2k+2N_2+2d+2}{2k+2N_2+2d+4}\sum_{s=N_2-N_1+d}^{N_2} a_{2,s}b_{1,N_2+d-s} \\
	&+\frac{2k+2N_2+2d+2}{2k+2N_2+2d+4}\sum_{s=d}^{N_2} a_{2,N_2+d-s}b_{2,s}\frac{2k+2s+2}{2k+2s+4} \\
	&= \frac{2k+2N_2+2d+2}{2k+2N_2+2d+4}\sum_{s=N_2-N_1+d}^{N_2} a_{1,N_2+d-s}b_{2,s}\\& +\sum_{s=N_2-N_1+d}^{N_2} a_{2,s}b_{1,N_2+d-s}\frac{2k+2s+2}{2k+2s+4} \\
	&+\frac{2k+2N_2+2d+2}{2k+2N_2+2d+4}\sum_{s=d}^{N_2} a_{2,s}b_{2,N_2+d-s}\frac{2k+2s+2}{2k+2s+4} ,
\end{align*}
For $s=d,\dots, N_2-N_1+d-1$, we examine the poles $-2s-4$ on both sides of the equation and we obtain that $$a_{2,N_2+d-s}b_{2,s}=a_{2,s}b_{2,N_2+d-s}\textrm{ for all }d=1,\dots,N_1.$$ In particular, for $s=d$, we have 
\begin{equation}\label{eq4}
a_{2,N_2}b_{2,d}=a_{2,d}b_{2,N_2},\mbox{ for all } d=1,\dots,N_1.
\end{equation}
Hence the equation above simplifies to
\begin{align*}
& \sum_{s=N_2-N_1+d}^{N_2} a_{1,N_2+d-s}b_{2,s}\frac{2k+2s+2}{2k+2s+4} \\&+\frac{2k+2N_2+2d+2}{2k+2N_2+2d+4}\sum_{s=N_2-N_1+d}^{N_2} a_{2,s}b_{1,N_2+d-s} \\
	&+\frac{2k+2N_2+2d+2}{2k+2N_2+2d+4}\sum_{s=N_2-N_1+d}^{N_2} a_{2,N_2+d-s}b_{2,s}\frac{2k+2s+2}{2k+2s+4} \\
	&= \frac{2k+2N_2+2d+2}{2k+2N_2+2d+4}\sum_{s=N_2-N_1+d}^{N_2} a_{1,N_2+d-s}b_{2,s} \\&+\sum_{s=N_2-N_1+d}^{N_2} a_{2,s}b_{1,N_2+d-s}\frac{2k+2s+2}{2k+2s+4} \\
	&+\frac{2k+2N_2+2d+2}{2k+2N_2+2d+4}\sum_{s=N_2-N_1+d}^{N_2} a_{2,s}b_{2,N_2+d-s}\frac{2k+2s+2}{2k+2s+4}.
\end{align*}
In the last two summations on both sides of the previous equation, consider the index range $N_2-N_1+d \leq s \leq N_2$. For such $s$, we have $d \leq N_2+d-s \leq N_1$ (i.e., the complementary index $N_2+d-s$ falls within $[d, N_1]$). By equation \eqref{eq4}, we obtain $a_{2,N_2+d-s}b_{2,N_2}=a_{2,N_2}b_{2,N_2+d-s}$ for all such $s$. Combining this with equation \eqref{extra} yields $$a_{2,N_2+d-s}b_{2,s}=a_{2,s}b_{2,N_2+d-s} \textrm{ for all } N_2-N_1+d \leq s \leq N_2.$$ Hence the last equation simplifies to

\begin{align*}
& \sum_{s=N_2-N_1+d}^{N_2} a_{1,N_2+d-s}b_{2,s}\frac{2k+2s+2}{2k+2s+4} \\&+\frac{2k+2N_2+2d+2}{2k+2N_2+2d+4}\sum_{s=N_2-N_1+d}^{N_2} a_{2,s}b_{1,N_2+d-s} \\
	&= \frac{2k+2N_2+2d+2}{2k+2N_2+2d+4}\sum_{s=N_2-N_1+d}^{N_2} a_{1,N_2+d-s}b_{2,s} \\&+\sum_{s=N_2-N_1+d}^{N_2} a_{2,s}b_{1,N_2+d-s}\frac{2k+2s+2}{2k+2s+4},
\end{align*}
or
\begin{align*}
& \sum_{s=N_2-N_1+d}^{N_2} \frac{2k+2s+2}{2k+2s+4}\left[a_{1,N_2+d-s}b_{2,s}-a_{2,s}b_{1,N_2+d-s}\right] \\
	&= \frac{2k+2N_2+2d+2}{2k+2N_2+2d+4}\sum_{s=N_2-N_1+d}^{N_2} \left[a_{1,N_2+d-s}b_{2,s}-a_{2,s}b_{1,N_2+d-s}\right].
\end{align*}
Since the poles on both sides of the equation are distinct, we must have that
 $$a_{1,N_2+d-s}b_{2,s}=a_{2,s}b_{1,N_2+d-s} \textrm{ for all } s=N_2-N_1+d,\dots,N_2 \textrm{ and }
d=1,\dots,N_1.$$ In particular, for $s=N_2$, we obtain
\begin{equation}\label{eq5}
a_{1,d}b_{2,N_2}=a_{2,N_2}b_{1,d},\mbox{ for all } d=1,\dots,N_1.
\end{equation}

We now turn our attention to the coefficients of the polynomials $g_i$'s and $k_i$'s. Without loss of generality, assume $m_1<m_2$. Then in equation \eqref{eq1}, for each $d=1,\dots,2m_2-m_1$, the coefficient of $z^{k-m_1-d}$ arises solely from
\begin{equation*}
	\left(T_{\overline{g_1}}T_{r^2\overline{k_2}}+T_{r^2\overline{g_2}}T_{\overline{k_1}}+T_{r^2\overline{g_2}}T_{r^2\overline{k_2}}\right)(z^k)= \left(T_{r^2\overline{k_2}}T_{\overline{g_1}} +T_{\overline{k_1}}T_{r^2\overline{g_2}}+T_{r^2\overline{k_2}}T_{r^2\overline{g_2}}\right)(z^k).
\end{equation*}
Taking the adjoint of both sides and following the same reasoning used to obtain equations \eqref{eq3}-\eqref{eq5} yields
\begin{equation}\label{eq6}
a_{1,-d}b_{2,-m_2}=a_{2,-m_2}b_{1,-d},\mbox{ for all }d=1,\dots,m_1,
\end{equation}
and
\begin{equation}\label{eq8}
a_{2,-d}b_{2,-m_2}=a_{2,-m_2}b_{2,-d},\mbox{ for all }d=1,\dots,m_2.
\end{equation}
It is easy to see that equations \eqref{eq6} and \eqref{eq8} imply 
\begin{equation}\label{extra2} a_{1,-d}b_{2,-d}=a_{2,-d}b_{1,-d},\textrm{ for all }d=1,\dots,m_1.
\end{equation}
Set $C_1=\frac{a_{2,N_2}}{b_{2,N_2}}$. Combining equations \eqref{eq2}, \eqref{eq5}, and \eqref{extra2} gives
\begin{equation}\label{eq7}
a_{1,-d}=\overline{C_1}b_{1,-d},\mbox{ for all } d=1,\dots,m_1.
\end{equation}
Hence, by the established relations between the coefficients, we obtain:
\[
\sum_{n=0}^{N_1} a_{1,n}z^n =a_{1,0}+ C_1\sum_{n=1}^{N_1} b_{1,n}z^n \quad\text{(by equation \eqref{eq5})},
\]
\[
\sum_{n=0}^{m_1} \overline{a}_{1,-n}\overline{z}^{n} =\overline{a}_{1,-0}+ C_1\sum_{n=1}^{m_1} \overline{b}_{1,-n}\overline{z}^{n} \quad\text{(by equation \eqref{eq7})},
\]
\[
r^2\sum_{n=0}^{N_2} a_{2,n}z^n =a_{2,0}|z|^2+ C_1|z|^2\sum_{n=1}^{N_2} b_{2,n}z^n \quad\text{(by equations \eqref{extra} and \eqref{eq4})},
\]
\[
r^2\sum_{n=0}^{m_2} \overline{a}_{2,-n}\overline{z}^{n} =\overline{a}_{2,-0}|z|^2+C_1 |z|^2\sum_{n=1}^{m_2} \overline{b}_{2,-n}\overline{z}^{n}\text{ (by equations (\ref{eq6}, \ref{eq8},\ref{eq7}))}.\]
It follows that the symbols $\Psi$ and $\Phi$ can be written as
	$$\Psi(z)=b_{1,0}+\overline{b}_{1,-0}+(b_{2,0}+\overline{b}_{2,-0})|z|^2+\Omega(z)$$ and 
	\begin{equation}\label{Phi}\Phi(z)=a_{1,0}+\overline{a}_{1,-0}+({a}_{2,0}+\overline{a}_{2,-0})|z|^2+C_1\Omega(z),\end{equation} where 
$$\Omega(z)=\sum_{n=1}^{N_1} b_{1,n}z^n+\sum_{n=1}^{m_1} \overline{b}_{1,-n}\overline{z}^{n}+|z|^2\left(\sum_{n=1}^{N_2} b_{2,n}z^n+ \sum_{n=1}^{m_2} \overline{b}_{2,-n}\overline{z}^{n} \right).$$
Using the facts that every Toeplitz operator commutes with itself and with the identity operator, and that Toeplitz operators with radial symbols commute, the commutation condition $T_{\Phi}T_{\Psi}=T_{\Psi}T_{\Phi}$ reduces to 
$$(a_{2,0}+\overline{a}_{2,-0})T_{r^2}T_{\Omega}+C_1(b_{2,0}+\overline{b}_{2,-0})T_{\Omega}T_{r^2}=C_1(b_{2,0}+\overline{b}_{2,-0})T_{r^2}T_{\Omega}+(a_{2,0}+\overline{a}_{2,-0})T_{\Omega}T_{r^2}.$$
Thus, for all $k\geq 0$, the terms in $z^{k+1}$ come only from
\begin{eqnarray*}&({a}_{2,0}+\overline{a}_{2,-0})T_{r^2}\left(T_{b_{1,1}z}+T_{b_{2,1}r^2z}\right)(z^k)\\&+C_1(b_{2,0}+\overline{b}_{2,-0})\left(T_{b_{1,1}z}+T_{b_{2,1}r^2z}\right)T_{r^2}(z^k)\\
	&=C_1(b_{2,0}+\overline{b}_{2,-0})T_{r^2}\left(T_{b_{1,1}z}+T_{b_{2,1}r^2z}\right)(z^k)\\&+({a}_{2,0}+\overline{a}_{2,-0})\left(T_{b_{1,1}z}+T_{b_{2,1}r^2z}\right)T_{r^2}(z^k).
\end{eqnarray*}	
Applying Lemma \ref{mellin} to evaluate these operator products yields, for all $k\geq 0$,
\begin{eqnarray*}&({a}_{2,0}+\overline{a}_{2,-0})\left(b_{1,1}\frac{2k+4}{2k+6}+b_{2,1}\frac{(2k+4)^2}{(2k+6)^2}\right)+C_1(b_{2,0}+\overline{b}_{2,-0})\left(b_{1,1}\frac{2k+2}{2k+4}+b_{2,1}\frac{2k+2}{2k+6}\right)\\&=
C_1(b_{2,0}+\overline{b}_{2,-0})\left(b_{1,1}\frac{2k+4}{2k+6}+b_{2,1}\frac{(2k+4)^2}{(2k+6)^2}\right)+({a}_{2,0}+\overline{a}_{2,-0})\left( b_{1,1}\frac{2k+2}{2k+4}+b_{2,1}\frac{2k+2}{2k+6}\right).
\end{eqnarray*}	
 Multiplying both sides of the equation by $(2k+6)^2$ and setting $k=-3$ (or multiplying both sides by $k+4$ and setting $k=-4$) gives $b_{2,1}({a}_{2,0}+\overline{a}_{2,-0})=C_1b_{2,1}(b_{2,0}+\overline{b}_{2,-0})$ (or $C_1b_{1,1}(b_{2,0}+\overline{b}_{2,-0})=b_{1,1}({a}_{2,0}+\overline{a}_{2,-0})$). Assuming that $b_{2,1}\neq 0$ (without loss of generality we may assume that at least one of $b_{2,1}$ or $b_{1,1}$ is not zero, otherwise if both are zero, we redo the same argument by looking for the terms in $z^{k+2}$), we conclude that $ ({a}_{2,0}+\overline{a}_{2,-0})=C_1(b_{2,0}+\overline{b}_{2,-0})$. Therefore equation \eqref{Phi} implies
\begin{eqnarray*}
	\Phi(z)&=&a_{1,0}+\overline{a}_{1,-0}+C_1(b_{2,0}+\overline{b}_{2,-0})|z|^2+C_1\Omega(z)\\
	&=&a_{1,0}+\overline{a}_{1,-0}+C_1\Bigg[\sum_{n=1}^{N_1} b_{1,n}z^n+\sum_{n=1}^{m_1} \overline{b}_{1,-n}\overline{z}^{n}\\&+&|z|^2\left(\sum_{n=0}^{N_2} b_{2,n}z^n+ \sum_{n=0}^{m_2} \overline{b}_{2,-n}\overline{z}^{n} \right)\Bigg]\\
	&=&C_1\Psi(z)-C_1(b_{1,0}+\overline{b}_{1,-0})+a_{1,0}+\overline{a}_{1,-0}\\
	&=&C_1\Psi(z)+C_2,
\end{eqnarray*}	
where $C_2=a_{1,0}+\overline{a}_{1,-0}-C_1(b_{1,0}+\overline{b}_{1,-0})$.
\hfill \ensuremath{\Box}

\section{Normal Toeplitz operators}
As a corollary of Theorem \ref{thm2}, we obtain the following characterization of such normal Toeplitz operators. This result serves as an analogue to  \cite[Corollary 17, p.~11]{ac}. We recall that a Toeplitz operator $T_f$ is said to be normal if it commutes with its adjoint, i.e., $T_f T_f^* = T_f^* T_f$. Since the adjoint of a Toeplitz operator is the Toeplitz operator associated with the complex conjugate of the symbol (i.e., $T_f^* = T_{\overline{f}}$), it follows that $T_f$ is normal if and only if $T_f$ commutes with $T_{\overline{f}}$.
\begin{corollary}
Let $\Phi$ be a biharmonic symbol as in Theorem \ref{thm2}. Then $T_{\Phi}$ is a normal operator if and only if $\Phi$ is a constant or the image  $\Phi(\mathbb{D})$ lies on a line in $\mathbb{C}$. 
\end{corollary}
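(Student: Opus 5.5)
The plan is to apply Theorem \ref{thm2} with $\Psi=\overline{\Phi}$, using $T_\Phi^*=T_{\overline{\Phi}}$. First we check that $\overline{\Phi}$ lies in the admissible class. Indeed,
$$\overline{\Phi}=\overline{g_1}\,{}^{\!*}\ldots$$
is more cleanly written as
$$\overline{\Phi}(z)=\big(g_1(z)+\overline{f_1(z)}\big)+|z|^2\big(g_2(z)+\overline{f_2(z)}\big),$$
where $g_i,f_i$ are replaced by the polynomials with conjugated coefficients where necessary. So $\overline{\Phi}$ has the same form, with the roles of the analytic and anti-analytic parts exchanged, and the leading coefficients remain nonzero. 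Hence $T_\Phi$ is normal iff $T_\Phi T_{\overline\Phi}=T_{\overline\Phi}T_\Phi$, and by Theorem \ref{thm2} this holds iff $\Phi=C_1\overline{\Phi}+C_2$ for some constants $C_1,C_2$.

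Next we analyse the identity $\Phi=C_1\overline\Phi+C_2$.

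\emph{Case $\Phi$ constant.} This case is trivial.

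\emph{Case $\Phi$ nonconstant.} Conjugating the identity gives $\overline\Phi=\overline{C_1}\Phi+\overline{C_2}$. Substituting back yields
$$\Phi=|C_1|^2\Phi+C_1\overline{C_2}+C_2,$$
so $(1-|C_1|^2)\Phi$ is constant. Since $\Phi$ is nonconstant, $|C_1|=1$. Write $C_1=e^{2i\alpha}$. Then $e^{-i\alpha}\Phi-e^{i\alpha}\overline\Phi=e^{-i\alpha}C_2$, which says
$$2i\,\mathrm{Im}\big(e^{-i\alpha}\Phi\big)=e^{-i\alpha}C_2.$$
So $\mathrm{Im}(e^{-i\alpha}\Phi)$ is a real constant $c$, and $\Phi(\mathbb{D})\subset\{w:\mathrm{Im}(e^{-i\alpha}w)=c\}$, which is a line.

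For the converse, suppose $\Phi(\mathbb{D})$ lies on a line $\{w:\mathrm{Im}(e^{-i\alpha}w)=c\}$. Then $e^{-i\alpha}\Phi-\overline{e^{-i\alpha}\Phi}=2ic$, so
$$\Phi=e^{2i\alpha}\overline\Phi+2ice^{i\alpha}.$$
Theorem \ref{thm2} (or directly the linearity of $T$ together with the fact that $T_{\overline\Phi}$ commutes with itself and with $I$) then gives normality. If $\Phi$ is constant, normality is obvious.

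The main subtlety is the hypothesis of Theorem \ref{thm2} that all four leading coefficients are nonzero. For $\overline\Phi$ this is automatic, since it merely swaps $f_i\leftrightarrow g_i$. In the converse direction, however, the sufficiency is a direct algebraic verification that needs no hypothesis at all. One remark is also needed on the degenerate "constant" case: under the standing nonvanishing assumptions $\Phi$ is never constant, so that alternative is included only for completeness.
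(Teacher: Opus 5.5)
Your argument is correct and is essentially the paper's: you apply Theorem~\ref{thm2} with $\Psi=\overline{\Phi}$, conjugate and substitute to get $(1-|C_1|^2)\Phi$ constant, and read off a line when $|C_1|=1$. You also usefully supply things the paper leaves implicit: the check that $\overline{\Phi}=(g_1+\overline{f_1})+|z|^2(g_2+\overline{f_2})$ is admissible, the cleaner description $\{w:\mathrm{Im}(e^{-i\alpha}w)=c\}$ of the line, and a direct proof of the converse. One correction: your closing claim that $\Phi$ is never constant under the standing assumptions is false (take $N_1=m_1=N_2=m_2=0$ with $a_{2,-0}=-\overline{a_{2,0}}$, so $\phi_2\equiv 0$ and $\Phi\equiv a_{1,0}+\overline{a_{1,-0}}$), but this is harmless since you treat the constant case separately.
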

\begin{proof}
$T_{\Phi}$ is normal if and only if $T_{\Phi}T_{\overline{\Phi}}=T_{\overline{\Phi}}T_{\Phi}$. Thus Theorem \ref{thm2} implies that constants $C_1$ and $C_2$ such that $\Phi(z)=C_1\overline{\Phi(z)}+C_2$ for all $z\in\mathbb{D}$. Taking the complex conjugate on both sides gives  $\overline{\Phi(z)}=\overline{C_1}\Phi(z)+\overline{C_2}$. When substituting this into the previous equation, we obtain $(1-|C_1|^2)\Phi(z)=C_1\overline{C_2}+C_2$. Thus if $|C_1|\neq 0$, we have $\Phi(z)=\frac{C_1\overline{C_2}+C_2}{1-|C_1|^2}$ and so $\Phi(z)$ is constant. Otherwise, if $|C_1|=1$, the equation $\Phi(z)=C_1\overline{\Phi(z)}+C_2$, where $C_1=a+ib$ and $C_2=\alpha+i\beta$, yields
\begin{eqnarray*}
	(1-a)\Re[\Phi(z)]-b\Im[\phi(z)]&=&\alpha\\
	-b\Re[\Phi(z)]+(1+a)\Im[\Phi(z)]&=&\beta.
\end{eqnarray*}
 Hence, for every random but fixed $z\in\mathbb{D}$, the pair $(\Re[\Phi(z)],\Im[\phi(z)])$ satisfies a linear relation, which describes a line in the plane. Therefore, for every $z\in\mathbb{D}$, $\Phi(z)$  lies on a fixed line in $\mathbb{C}$. 
 \end{proof}

\section{Appendix}
In the proof of our main result, we assumed $N_2 > N_1$ and obtained the desired conclusion. In what follows, we discuss the cases $N_2 < N_1$ and $N_2 = N_1$ and show that the same result holds. More precisely, we shall show that our key equation, which is equation $\eqref{eq5}$, remains true in those cases.

{\bf{Case $N_2<N_1$:}} For  all $d=1,\dots,N_1-N_2$ and  $k\geq 0$, the terms in $z^{k+2N_2+d}$  of equation \eqref{eq1} come only from
\begin{eqnarray*}
	&\displaystyle{\left(\sum_{s=N_2+d}^{N_1} T_{a_{1,s}z^{s}}T_{b_{2,2N_2+d-s}r^2z^{2N_2+d-s}}+\sum_{s=N_2+d}^{N_1} T_{a_{2,2N_2+d-s}r^{2}z^{2N_2+d-s}}T_{b_{1,s}z^{s}}\right)(z^k)}\\
	& = \displaystyle{\left(\sum_{s=N_2+d}^{N_1} T_{b_{2,2N_2+d-s}r^2z^{2N_2+d-s}}T_{a_{1,s}z^{s}}+\sum_{s=N_2+d}^{N_1} T_{b_{1,s}z^{s}}T_{a_{2,2N_2+d-s}r^{2}z^{2N_2+d-s}}\right)(z^k)}.
\end{eqnarray*}
Then Lemma \ref{mellin} implies  
\begin{align*}
	& \sum_{s=N_2+d}^{N_1} a_{1,s}b_{2,2N_2+d-s}\frac{2k+4N_2+2d-2s+2}{2k+4N_2+2d-2s+4}\\
	&+ \frac{2k+4N_2+2d+2}{2k+4N_2+2d+4}\sum_{s=N_2+d}^{N_1} a_{2,2N_2+d-s}b_{1,s} \\
	&=  \frac{2k+4N_2+2d+2}{2k+4N_2+2d+4}\sum_{s=N_2+d}^{N_1} a_{1,s}b_{2,2N_2+d-s}\\
	&+\sum_{s=N_2+d}^{N_1} a_{2,2N_2+d-s}b_{1,s}\frac{2k+4N_2+2d-2s+2}{2k+4N_2+2d-2s+4} . 
\end{align*}
Thus, for all $s=N_2+d,\dots,N_1$ and  $d=1,\dots,N_1-N_2$, we have
$$a_{1,s}b_{2,2N_2+d-s}=a_{2,2N_2+d-s}b_{1,s}.$$
In particular, for $s=N_2+d$, we obtain 
\begin{equation*}
	a_{1,N_2+d}b_{2,N_2}=a_{2,N_2}b_{1,N_2+d}\quad\textrm{for all } d=1,\dots,N_1-N_2,
\end{equation*} 
or
\begin{equation}\label{eqt01}
	a_{1,s}b_{2,N_2}=a_{2,N_2}b_{1,s}\quad\textrm{for all } s=N_2+1,\dots,N_1.
\end{equation} 
Now, for all $d=1,\cdots,N_2$ and $k\geq 0$, the terms in $z^{k+N_2+d}$ of equation \eqref{eq1}  come only from
\begin{align*}
	& \Bigg(\sum_{s=d}^{N_1} T_{a_{1,s}z^{s}}T_{b_{2,N_2+d-s}r^2z^{N_2+d-s}} +\sum_{s=d}^{N_1} T_{a_{2,N_2+d-s}r^2z^{N_2+d-s}}T_{b_{1,s}z^{s}} \\
	&+\sum_{s=d}^{N_2} T_{a_{2,N_2+d-s}r^2z^{N_2+d-s}}T_{b_{2,s}r^2z^{s}} \Bigg)(z^k)\\
	&=\Bigg(\sum_{s=d}^{N_1} T_{b_{2,N_2+d-s}r^2z^{N_2+d-s}}T_{a_{1,s}z^{s}} +\sum_{s=d}^{N_1} T_{b_{1,s}z^{s}}T_{a_{2,N_2+d-s}r^2z^{N_2+d-s}} \\
	&+\sum_{s=d}^{N_2} T_{b_{2,s}r^2z^{s}} T_{a_{2,N_2+d-s}r^2z^{N_2+d-s}}\Bigg)(z^k).
\end{align*}
Consequently, Lemma \ref{mellin} implies
\begin{align*}
	& \sum_{s=d}^{N_1} a_{1,s}b_{2,N_2+d-s}\frac{2k+2N_2+2d-2s+2}{2k+2N_2+2d-2s+4} \\
	& +\frac{2k+2N_2+2d+2}{2k+2N_2+2d+4}\sum_{s=d}^{N_1} a_{2,N_2+d-s}b_{1,s} \\
	&+\frac{2k+2N_2+2d+2}{2k+2N_2+2d+4}\sum_{s=d}^{N_2} a_{2,N_2+d-s}b_{2,s}\frac{2k+2s+2}{2k+2s+4} \\
	&= \frac{2k+2N_2+2d+2}{2k+2N_2+2d+4}\sum_{s=d}^{N_1} a_{1,s}b_{2,N_2+d-s}\\
	& +\sum_{s=d}^{N_1} a_{2,N_2+d-s}b_{1,s}\frac{2k+2N_2+2d-2s+2}{2k+2N_2+2d-2s+4} \\
	&+\frac{2k+2N_2+2d+2}{2k+2N_2+2d+4}\sum_{s=d}^{N_2} a_{2,N_2+d-s}b_{2,s}\frac{2k+2N_2+2d-2s+2}{2k+2N_2+2d-2s+4}.
\end{align*}
Thus
\begin{align}\label{last one}
	& \sum_{s=N_2-N_1+d}^{N_2} a_{1,N_2+d-s}b_{2,s}\frac{2k+2s+2}{2k+2s+4}\nonumber \\
	& +\frac{2k+2N_2+2d+2}{2k+2N_2+2d+4}\sum_{s=d}^{N_1} a_{2,N_2+d-s}b_{1,s}\nonumber \\
	&+\frac{2k+2N_2+2d+2}{2k+2N_2+2d+4}\sum_{s=d}^{N_2} a_{2,N_2+d-s}b_{2,s}\frac{2k+2s+2}{2k+2s+4}\nonumber \\
	&= \frac{2k+2N_2+2d+2}{2k+2N_2+2d+4}\sum_{s=d}^{N_1} a_{1,s}b_{2,N_2+d-s}\nonumber\\
	& +\sum_{s=N_2-N_1+d}^{N_2} a_{2,s}b_{1,N_2+d-s}\frac{2k+2s+2}{2k+2s+4}\nonumber \\
	&+\frac{2k+2N_2+2d+2}{2k+2N_2+2d+4}\sum_{s=d}^{N_2} a_{2,s}b_{2,N_2+d-s}\frac{2k+2s+2}{2k+2s+4}.
\end{align}
Since $N_2<N_1$, we have  $N_2-N_1+d<d$, so the set $\{s=N_2-N_1+d,\dots,d\}$ is non-empty. Examining the poles $-2s-4$ for each such $s$ on both sides of the last equation yields $$a_{1,N_2+d-s}b_{2,s}=a_{2,s}b_{1,N_2+d-s}\quad \textrm{for all }d=1,\dots,N_2.$$
In particular, for $s=d$, we obtain \begin{equation}\label{second part}
	a_{1,N_2}b_{2,d}=a_{2,d}b_{1,N_2}\quad\textrm{for all }d=1,\dots,N_2.\end{equation}
Combining equations \eqref{eqt01} and \eqref{second part}, we conclude that 
\begin{equation}\label{eqt02}
	a_{2,N_2}b_{2,s}=a_{2,s}b_{2,N_2},\quad  \mbox{for all }s=1,\dots,N_2.
\end{equation}
Using equation \eqref{eqt01}, equation \eqref{last one} reduces to
\begin{align*}
	& \sum_{s=d}^{N_2} a_{1,N_2+d-s}b_{2,s}\frac{2k+2s+2}{2k+2s+4} \\
	&+\frac{2k+2N_2+2d+2}{2k+2N_2+2d+4}\sum_{s=d}^{N_1} a_{2,N_2+d-s}b_{1,s} \\
	&= \frac{2k+2N_2+2d+2}{2k+2N_2+2d+4}\sum_{s=d}^{N_1} a_{1,s}b_{2,N_2+d-s}\\
	& +\sum_{s=d}^{N_2} a_{2,s}b_{1,N_2+d-s}\frac{2k+2s+2}{2k+2s+4}.
\end{align*}
By comparing the poles $-2s-4$ on both sides of the above equation, we must have
$$a_{1,N_2+d-s}b_{2,s}=a_{2,s}b_{1,N_2+d-s} \textrm{ for all }s=d,\cdots,N_2 \textrm{ and } d=1,\cdots,N_2.$$ In particular, for $s=N_2$, we have 
\begin{equation}\label{eq4}
	a_{1,d}b_{2,N_2}=a_{2,N_2}b_{1,d},\mbox{ for all } d=1,\dots,N_2.
\end{equation}
Hence, equations \eqref{eqt01} and \eqref{eq4} imply
\begin{equation}\label{eqt03}
	a_{1,s}b_{2,N_2}=a_{2,N_2}b_{1,s},\mbox{ for all } s=1,\dots,N_1,
\end{equation}
which is exactly equation \eqref{eq5}.\\

{\bf{Case $N_2=N_1$:}} In this case, for all $d=1,\cdots,N_2$ and $k\geq 0$, the terms in $z^{k+N_2+d}$  of equation \eqref{eq1} come only from
\begin{align*}
	& \Bigg(\sum_{s=d}^{N_2} T_{a_{1,s}z^{s}}T_{b_{2,N_2+d-s}r^2z^{N_2+d-s}} +\sum_{s=d}^{N_2} T_{a_{2,N_2+d-s}r^2z^{N_2+d-s}}T_{b_{1,s}z^{s}} \\
	&+\sum_{s=d}^{N_2} T_{a_{2,N_2+d-s}r^2z^{N_2+d-s}}T_{b_{2,s}r^2z^{s}} \Bigg)(z^k)\\
	&=\Bigg(\sum_{s=d}^{N_2} T_{b_{2,N_2+d-s}r^2z^{N_2+d-s}}T_{a_{1,s}z^{s}} +\sum_{s=d}^{N_2} T_{b_{1,s}z^{s}}T_{a_{2,N_2+d-s}r^2z^{N_2+d-s}} \\
	&+\sum_{s=d}^{N_2} T_{b_{2,s}r^2z^{s}} T_{a_{2,N_2+d-s}r^2z^{N_2+d-s}}\Bigg)(z^k).
\end{align*}
Applying Lemma \ref{mellin} yields
\begin{align*}
	& \sum_{s=d}^{N_2} a_{1,s}b_{2,N_2+d-s}\frac{2k+2N_2+2d-2s+2}{2k+2N_2+2d-2s+4} \\
	& +\frac{2k+2N_2+2d+2}{2k+2N_2+2d+4}\sum_{s=d}^{N_2} a_{2,N_2+d-s}b_{1,s} \\
	&+\frac{2k+2N_2+2d+2}{2k+2N_2+2d+4}\sum_{s=d}^{N_2} a_{2,N_2+d-s}b_{2,s}\frac{2k+2s+2}{2k+2s+4} \\
	&= \frac{2k+2N_2+2d+2}{2k+2N_2+2d+4}\sum_{s=d}^{N_2} a_{1,s}b_{2,N_2+d-s}\\
	& +\sum_{s=d}^{N_2} a_{2,N_2+d-s}b_{1,s}\frac{2k+2N_2+2d-2s+2}{2k+2N_2+2d-2s+4} \\
	&+\frac{2k+2N_2+2d+2}{2k+2N_2+2d+4}\sum_{s=d}^{N_2} a_{2,N_2+d-s}b_{2,s}\frac{2k+2N_2+2d-2s+2}{2k+2N_2+2d-2s+4},
\end{align*}
or equivalently 
\begin{align*}
	& \sum_{s=d}^{N_2} a_{1,N_2+d-s}b_{2,s}\frac{2k+2s+2}{2k+2s+4} \\
	& +\frac{2k+2N_2+2d+2}{2k+2N_2+2d+4}\sum_{s=d}^{N_2} a_{2,N_2+d-s}b_{1,s} \\
	&+\frac{2k+2N_2+2d+2}{2k+2N_2+2d+4}\sum_{s=d}^{N_2} a_{2,N_2+d-s}b_{2,s}\frac{2k+2s+2}{2k+2s+4} \\
	&= \frac{2k+2N_2+2d+2}{2k+2N_2+2d+4}\sum_{s=d}^{N_2} a_{1,s}b_{2,N_2+d-s}\\
	& +\sum_{s=d}^{N_2} a_{2,s}b_{1,N_2+d-s}\frac{2k+2s+2}{2k+2s+4} \\
	&+\frac{2k+2N_2+2d+2}{2k+2N_2+2d+4}\sum_{s=d}^{N_2} a_{2,s}b_{2,N_2+d-s}\frac{2k+2s+2}{2k+2s+4}.
\end{align*}
By comparing the poles $-2s-4$ for $d\leq s\leq N_2$ on both sides, we must have
\begin{align}\label{eqt04}
	&\nonumber a_{1,N_2+d-s}b_{2,s} + \frac{-2s-2+2N_2+2d}{-2s+2N_2+2d}a_{2,N_2+d-s}b_{2,s} \\
	&= a_{2,s}b_{1,N_2+d-s} + \frac{-2s-2+2N_2+2d}{-2s+2N_2+2d}a_{2,s}b_{2,N_2+d-s}.
\end{align}
In particular, for $s=d$, we obtain
\begin{equation}\label{before last}
	 a_{1,N_2}b_{2,d} + \frac{2N_2-2}{2N_2}a_{2,N_2}b_{2,d} 
	= a_{2,d}b_{1,N_2} + \frac{2N_2-2}{2N_2}a_{2,d}b_{2,N_2}, 
\end{equation}
and this holds for every $d$ with  $1\leq d\leq N_2$. Thus, for $d=N_2$, we must have
\begin{equation}\label{last}
	a_{1,N_2}b_{2,N_2}= a_{2,N_2}b_{1,N_2}.
\end{equation}
Hence, for all $1\leq d\leq N_2$, equations \eqref{before last} and \eqref{last} imply
\begin{align*}
	 \frac{a_{2,d}}{b_{2,d}}&=\frac{a_{1,N_2}+ \frac{2N_2-2}{2N_2}a_{2,N_2} }{b_{1,N_2} + \frac{2N_2-2}{2N_2}b_{2,N_2}} \\
	&= \frac{\frac{a_{1,N_2}b_{2,N_2}}{a_{2,N_2}} + \frac{2N_2-2}{2N_2}b_{2,N_2}}{\frac{a_{2,N_2}b_{1,N_2}}{b_{2,N_2}} + \frac{2N_2-2}{2N_2}a_{2,N_2} } \\
	&= \frac{a_{2,N_2}}{b_{2,N_2}}.
\end{align*}
Therefore, equation \eqref{eqt04} simplifies to
\begin{equation*}
	a_{1,N_2+d-s}b_{2,s} = a_{2,s}b_{1,N_2+d-s} ,
\end{equation*}
for all $1\leq d\leq s\leq N_2$. Finally, setting $s=N_2$,  we obtain for all $1\leq d\leq N_2$
\begin{equation*}
	a_{1,d}b_{2,N_2} = a_{2,N_2}b_{1,d},
\end{equation*}
which is identical to equation \eqref{eq5}.

\end{document}